\newtheorem{theorem}{Theorem}
\newtheorem{lemma}{Lemma}
\newtheorem{proposition}[theorem]{Proposition}
\theoremstyle{definition} \newtheorem{example}{Example}
\newtheorem{remark}{Remark}
\theoremstyle{remark}
\numberwithin{equation}{section}
\def\A{{{\mathbb A}}}
\def\Z{{{\mathbb Z }}}
\def\Spec{{{\rm Spec \,}}}
\def\dim{{{\rm dim \,}}}
\def\ker{{{\rm ker \,}}}
\def\im{{{\rm im \,}}}
\def\id{{{\rm id }}}
\def\length{{{\rm length \,}}}
\def\pr{{{\rm pr }}}
\def\diag{{{\rm diag }}}
\begin{document}

\medskip

\begin{center}
{\Large\sc Fibred product of commutative algebras: generators and
relations}\end{center}
\medskip
\begin{center}
Nadezda V. TIMOFEEVA

\smallskip

P.G. Demidov Yaroslavl' State University

Sovetskaya str. 14, 150000 Yaroslavl', Russia

e-mail: {\it ntimofeeva@list.ru}
\end{center}
\bigskip

\begin{quote} The method of direct computation of universal
(fibred) product in the category of commutative associative
algebras of finite type with unity over a field is given and
proven. The field of coefficients is not supposed to be
algebraically closed and can be of any characteristic. Formation
of fibred product of commutative associative algebras is an
algebraic counterpart of gluing algebraic schemes by means of some
equivalence relation in algebraic geometry. If initial algebras
are finite-dimensional vector spaces the dimension of their
product obeys Grassmann-like formula. Finite-dimensional case
 means geometrically the strict version of adding two collections of
points containing some common part.

The method involves description of algebras by generators and
relations on input and returns similar description of the product
algebra. It is "ready-to-eat"\, even for computer realization. The
product algebra is well-defined: taking another descriptions of
the same algebras leads to isomorphic product algebra. Also it is
proven that the product algebra enjoys universal property, i.e. it
is indeed fibred product. The input data is a triple of algebras
and a pair of homomorphisms $A_1\stackrel{f_1}{\to}A_0
\stackrel{f_2}{\leftarrow}A_2$. Algebras and homomorphisms can be
described in any fashion. We prove  that for computing the fibred
product it is enough to restrict to the case when $f_i,i=1,2$ are
surjective and describe how to reduce to surjective case. Also the
way to choose generators and relations for input algebras is
considered.

Bibliography: 5 titles.

{\it Keywords:} commutative algebras over a field, affine
Grothendieck' schemes, universal product, amalgamated sum
\end{quote}
\section* {Introduction}
\subsection{Motivation and General Problem}
For a motivating example consider affine plane ${\mathbb A}^2$ over
a field $k$ and two non-proportional irreducible polynomials
$g_1,g_2$ in two variables $x,y$ over the same field. Let these
polynom\-ials have zero sets $Z(g_1)\subset {\mathbb A}^2 \supset
Z(g_2)$ and a set of common zeros $Z(g_1,g_2)=Z(g_1)\cap Z(g_2)$. If
$k$ is algebraically closed $Z(g_i)$ are irreducible curves for any
$g_i, i=1,2,$ and $Z(g_1,g_2)$ is a discrete collection of points. A
union of curves $Z(g_1)\cup Z(g_2)$ is given by zeros of the product
polynomial $g_1g_2$, i.e. $Z(g_1)\cup Z(g_2)=Z(g_1g_2)$ and provides
a simple example of amalgamated sum
$Z(g_1)\coprod_{Z(g_1,g_2)}Z(g_2)$ of algebraic schemes $Z(g_i),
i=1,2$ (precise definition will be given below). Transferring to
algebraic counterpart one has $k$-algebras $A_i=k[x,y]/(g_i), i=1,2$
corresponding to curves $Z(g_i), i=1,2$ respectively, and the
$k$-algebra of intersection locus $A_0=k[x,y]/(g_1, g_2)$. There are
obvious $k$-algebra homomorphisms $f_i:A_i \twoheadrightarrow A_0$.
Formation of the algebra $k[x,y]/(g_1g_2)$ for the union of two
components provides an example of universal (fibred) product of
$k$-algebras \linebreak $A_1 \times_{A_0}A_2$.

For our purposes it is enough to restrict by affine algebraic
$k$-schemes of finite type, i.e. prime spectra (sets of prime
ideals with natural topology and collection of local rings forming
a structure sheaf on the spectrum) of commutative associative
$k$-algebras of finite type with unity (for complete theory cf.
\cite[ch.2]{Hart}). We focus on algebraic side and operate
exclusively with underlying algebras.

We describe general {\it quotient problem} in the category of
algebraic schemes over a field $k$. The problem includes following
ingredients:
\begin{itemize}
\item{a scheme $X$;}
\item{a subscheme $R \subset X\times X$;}
\item{morphisms $p_i: R \subset X\times X \to X$ defined as
composites of the immersion with the projection on $i$th factor.}
\end{itemize}

The subscheme $R \subset X\times X$ is said to be an {\it
equivalence relation}; this means that it satisfies three
requirements as follows:
\begin{enumerate}
\item{$R$ is {\it reflexive}, i.e. $R \supset \diag(X)$ where
$\diag (X)$ is an image of the diagonal immersion $\diag: X
\hookrightarrow X\times X$;}
\item{$R$ is {\it symmetric}, i.e. the immersion
$R \subset X \times X$ is stable under the involution intertwining
first and second factors of the product $X\times X$;}
\item{$R$ is {\it transitive}, i.e.
$\pr_{13}(R \times X \cap X\times R) \subset R$ where the
intersection is taken in $X\times X\times X$. Here $\pr_{13}:
X\times X\times X\to X\times X$ is the projection to the product
of the 1st and 3rd factors.}
\end{enumerate}

The question is \cite[ch. 1, 4.3]{Dan} to construct (if possible)
the {\it universal quotient} $X/R$. This is an object  fitting
into the commutative diagram
\begin{equation*}\xymatrix{R\ar[d]_{p_1} \ar[r]^{p_2}& X \ar[d]\\
X \ar[r]& X/R}
\end{equation*}
such that for any other commutative square
\begin{equation*}\xymatrix{R\ar[d]_{p_1} \ar[r]^{p_2}& X \ar[d]\\
X \ar[r]& T}
\end{equation*}
there is a unique morphism $\tau: X/R \to T$ (in the appropriate
category) fitting into the commutative diagram
\begin{equation*}
\xymatrix{R\ar[d]_{p_1} \ar[r]^{p_2}& X \ar[d] \ar[ddr]\\
X \ar[r] \ar[drr]& X/R \ar[rd]^\tau &\\
&& T}
\end{equation*}
The best result should be to know precisely when the universal
quotient exist in some  category (for example, in the category of
algebraic schemes)  and when it does not.

\subsection{Particular Cases} \begin{enumerate} \item{The notion
of algebraic space \cite{Knu} which appears when morphisms $p_i,
i=1,2$ assumed to be \'{e}tale. In this case the object $X/R$
belongs to the category of algebraic spaces.} \item{Open covering
of a scheme $Y$ when $X=\bigsqcup U_{\alpha}$ is a disjoint union
of open subschemes of $Y$, $R=\bigsqcup R_{\alpha \beta}$ and
morphisms $R_{\alpha \beta} \to U_{\alpha}$ and $R_{\alpha \beta}
\to U_{\beta}$ are local isomorphisms. In this case the object
$X/R$ is the scheme $Y$ whose open cover was considered.}
\item{Let the scheme $X$ is acted
upon by an algebraic group $G$ and $R$ is a subscheme induced by
$G$-equivalence \cite{MumFo}. In this case if $X/G=X/R$ exists its
universality means that it is a categorical quotient of the scheme
$X$ by $G$.} \end{enumerate}

\subsection{Case of Several Components} Now let $X$ be
a disjoint union of schemes $X=X_1\bigsqcup X_2$. Then
$$
X\times X=X_1 \times X_1\bigsqcup X_1 \times X_2 \bigsqcup
X_2\times X_1 \bigsqcup X_2 \times X_2
$$
 and the equivalence
relation is broken into 4 disjoint components
$$
R=R_{11} \bigsqcup R_{12}\bigsqcup R_{21}\bigsqcup R_{22}.
$$
The problem reduces to the search of the universal completion of
the diagram
\begin{equation*} \xymatrix{R_{ij}\ar[d]_{p_i} \ar[r]^{p_j}& X_j\\
X_i}
\end{equation*}
This universal completion is called an {\it amalgam} (or an {\it
amalgamated sum}) of schemes $X_i$ and $X_j$ with respect to
$R_{ij}$ and is denoted as $X_i\coprod_{R_{ij}} X_j$. If for each
pair $i,j=1,2$ the amalgam $X_i\coprod_{R_{ij}} X_j$ exists then
$X/R=X\coprod_R X=\coprod_{i,j}(X_i\coprod_{R_{ij}} X_j)$.

\subsection{Schemes and algebras}
We recall that affine Grothendieck' scheme of finite type over a
field $k$ is a prime spectrum $\Spec A$ of associative commutative
algebra $A$ of finite type with unity over $k$.

 Recall that the $k$-algebra $A$ is said to
be of finite type if it admits a surjective homomorphism of
polynomial algebra in finite set of variables $k[x_1, \dots , x_n]
\twoheadrightarrow A$. In particular this means that  algebras $A$
under consideration  can have any finite Krull dimension (if $A$
admits an epimorphism of polynomial algebra in $n$ variables then
the Krull dimension of $A$ is not greater than $n$). In this case
the algebra $A$ as $k$-vector space can be infinitely-dimensional.

Generally, fibred product and  amalgam (or  product and corpoduct,
or small limits) are dual categorical notions
\cite[1.17,1.18]{GelMan} and are not obliged to exist in any
category. The schemes under consideration are prime spectra of
associative commutative algebras of finite type over $k$. Duality
is done by functorial correspondence taking $k$-algebra to its
prime spectrum. The fibred product of associative commutative
$k$-algebras of finite type as taken by this functor to the
amalgam of corresponding spectra. The functorial behavior of
fibred product of algebras is analogous to one of fibred product
of schemes.

Despite that existence of fibred product for associative
commutative algebras is known, the method of explicit computation
of it is not described in the literature. We fill this gap.

Let $A$ be a commutative associative algebra of finite type with
unity; then it can be represented as a quotient algebra of
$n$-generated polynomial algebra $\phi: k[x_1, \dots ,x_n]
\twoheadrightarrow A$ where $n$ depends on the structure of $A$.
The representing homomorphism $\phi$ corresponds to the immersion
of the scheme $\Spec A$ into $k$-affine space ${\mathbb A}^n=\Spec
k[x_1, \dots ,x_n]$. By means of this immersion we interpret the
abstract
 $k$-scheme $Z$ as a closed subscheme $Z\subset
{\mathbb A}^n$ of affine space. We call the isomorphism $A \cong
k[x_1, \dots , x_n]/\ker \phi$ (resp., the immersion $Z \subset
{\mathbb A}^n$) the {\it affine representation} of the algebra $A$
(resp., scheme $Z$). This is the reason why any associative
commutative algebra of finite type with unity is called an {\it
affine algebra}. Then the $k$-algebra is affine if and only if it
has affine representation. Any affine algebra has many different
affine representations.

We prove the following result.
\begin{theorem} The universal (fibred) product of affine algebras
of finite type over a field can be compute explicitly by means of
generators and relations.
\end{theorem}
The method to compute fibred products of affine algebras
constitutes the main subject of this article and described below.

First we fix the terminology and describe the algorithm which
constructs algebra to be a universal product of algebras by means
of their appropriate affine representations. Then we confirm that
choice of different appropriate affine representations leads to
isomorphic algebras. Finally we prove the universality and hence
confirm that the obtained algebra is indeed a fibred product.

I would like to thank Laboratory of Discrete and Computational
Geometry  for the support during my work on this article (Grant No
1.1875.2014 (No 2014/258)) and my Diploma student I.~Chistousov
for testing the finite-dimensional version of the method by
thoroughly computing various examples.

\section{Construction}
\subsection{Glossary}
We  work with polynomial rings of view $k[x_1, \dots, x_n],$ its
ideals and quotients.

The {\it basis} of the ideal $I$ is any system of generators of
$I$ as $k[x_1, \dots, x_n]$-module.

The basis is {\it minimal} if it does not span $I$ whenever any of
elements is excluded.

The {\it $k$-basis} of polynomial algebra (which is not obliged to
be with unity) is its basis if the algebra is considered as
$k$-vector space.

The {\it $k$-relation} between elements of the algebra $A$ (which
is not obliged to be with unity) is a nontrivial $k$-linear
function (nontrivial $k$-linear combination) in these elements
which equals 0 in $A$. As usually, linear combinations with finite
number of nonzero coefficients are considered.

Fix the natural ordering of  variables $x_1, \dots, x_n$; then we
can use shorthand notation $x^{\alpha}$ for the monomial
$x_1^{\alpha_1} \dots x_n^{\alpha_n}$. The symbol $\alpha$ denotes
the row of degrees $\alpha_1, \dots , \alpha_n$.
\subsection{Algorithm}
\subsubsection*{Reduction to surjective morphisms} We are given three
algebras $A_0, A_1, A_2$ with morphisms
$A_1\stackrel{f_1}{\longrightarrow}
A_0\stackrel{f_2}{\longleftarrow } A_2$. Show that we can assume
that both $f_i$, $i=1,2$ are surjective. Since the morphisms $f_i$
must include into commutative squares of the form
\begin{equation}\label{cd1} \xymatrix{A_0 &\ar[l]_{f_2}
A_2\\
\ar[u]^{f_1} A_1& \ar[l]^{\chi_1} A_T
\ar[u]_{\chi_2}}\end{equation} then for any such a commutative
square the composite morphisms $f_1 \circ \chi_1$ and $f_2 \circ
\chi_2$ have coincident images in $A_0.$ By commutativity of the
square $\im f_1 \circ \chi_1 =\im f_2 \circ \chi_2 \subset \im
f_i$, $i=1,2$. This means that $\im f_i \circ \chi_i \subset \im
f_1 \cap \im f_2$ and the morphism $\chi_i$ factors through the
subalgebra $f_i^{-1}(\im f_1 \cap \im f_2) \subset A_i$ for
$i=1,2$. Then all algebras of interest $A_T$ complete the square
\begin{equation*}
\xymatrix{\im f_1 \cap \im f_2& \ar@{->>}[l]_{f'_2} {f_2}^{-1}(\im
f_1 \cap
\im f_2)\\
\ar@{->>}[u]^{f'_1} f_1^{-1}(\im f_1 \cap \im f_2)& \ar[l] A_T
\ar[u]}
\end{equation*}
to commute.

Denote $A'_1\stackrel{f'_1}{\twoheadrightarrow}A'_0
\stackrel{f'_2}{\twoheadleftarrow} A'_2$ where $A'_0=\im f_1 \cap
\im f_2,$ $A'_i=f_i^{-1}(A'_0)$ and $f'_i=f_i|_{A'_i},$ $i=1,2.$

Let there is a fibred product $A'_1\times _{A_0'} A'_2$ for the
case $A'_1\stackrel{f'_1}{\twoheadrightarrow}A'_0
\stackrel{f'_2}{\twoheadleftarrow} A'_2$. Confirm that $A'_1\times
_{A_0'} A'_2$ is also a universal product for the initial
arbitrary data \linebreak $A_1\stackrel{f_1}{\longrightarrow}A_0
\stackrel{f_2}{\longleftarrow} A_2$. By the construction of
algebras $A'_i$, $i=0,1,2,$ and by the definition of the algebra
$A'_1\times _{A_0'} A'_2$ it includes into the commutative diagram
\begin{equation*}\xymatrix{A_0&&&\ar[lll]_{f_2}A_2\\
&\ar@{^(->}[ul]A'_0&\ar[l]A'_2 \ar@{^(->}[ur]
\\
&A'_1 \ar[u] \ar@{^(->}[ld]&\ar[l]A'_1\times _{A_0'} A'_2\ar[u]
\\
A_1\ar[uuu]^{f_1}}
\end{equation*}
Let $A_T$ fit into the commutative diagram (\ref{cd1}). Then by
the universality of $A'_1\times _{A_0'} A'_2$ as a product there
is a unique homomorphism $\varphi: A_T\to A'_1\times _{A_0'} A'_2$
which fits in the commutative diagram
\begin{equation*}\xymatrix{A_0&&&\ar[lll]_{f_2}A_2\\
&\ar@{^(->}[ul]A'_0&\ar[l]_{f'_2}A'_2 \ar@{^(->}[ur]
\\
&A'_1 \ar[u]^{f'_1} \ar@{^(->}[ld]&\ar[l]A'_1\times _{A_0'}
A'_2\ar[u]
\\
A_1\ar[uuu]^{f_1}&&& \ar[lll] \ar[ull] \ar@{.>}[ul]_{\varphi}
\ar[uul] A_T \ar[uuu]}
\end{equation*}
This diagram provides universality of the algebra $A'_1\times
_{A_0'} A'_2$ as a product for initial data, i.e. $$A'_1\times
_{A_0'} A'_2=A_1\times _{A_0} A_2.$$

  Now we can replace everywhere our initial arbitrary data by
the more special case when morphisms $f_i$, $i=1,2$, are
surjective.

\subsubsection*{Easy case} Assume that
$A_i \cong k[x_1, \dots, x_n]/I_i $, $i=0,1,2,$ so that $A_0=A_1
\otimes_{k[x_1, \dots, x_n]} A_2$. This assumption says that the
subscheme $\Spec A_0 \cong Z_0\subset {\mathbb A}^n$ is an
intersection of subschemes $Z_1\subset {\mathbb A}^n$ and $Z_2
\subset {\mathbb A}^n$, $Z_i \cong \Spec A_i$, $i=1,2$.

Choose minimal bases in ideals $I_i$, $i=1,2$. Consider set of
those monomials in $k[x_1, \dots , x_n]$ which  are taken to 0
 in both $A_1$ and $A_2$ by homomorphisms $f_1$ and $f_2$ respectively. It is clear
that if $x^{\alpha}$ is such a monomial that for all $\beta \in
\Z_{\ge 0}^n$ the monomial $x^{\alpha + \beta}$ is taken to 0 in
both $A_1$, $A_2$. Then the set of monomials under consideration
generates an ideal $J$.

 Quotient algebra $k[x_1, \dots ,x_n]/J$
includes into the commutative square
\begin{equation*}\xymatrix{A_0 &\ar@{->>}[l]_{f_2}
A_2\\
\ar@{->>}[u]^{f_1} A_1& \ar@{->>}[l] k[x_1, \dots ,x_n]/J
\ar@{->>}[u]}\end{equation*}

Consider $k$-basis elements in $k[x_1, \dots ,x_n]/J$. Since $A_i$
are not obliged to be finite-dimensional over $k$, $k[x_1, \dots
,x_n]/J$ can  also have infinite $k$-basis. We form the list $L$
of $k$-relations among $k$-basis elements of $k[x_1, \dots
,x_n]/J$ as follows.

 The
$k$-relation $\sum a_{\alpha} x^{\alpha}$ between elements of
$k[x_1, \dots ,x_n]/J$ (possibly monomial) is include into $L$ if
and only if it equals 0 in both of $A_1$, $A_2$. It is clear that
the linear span $<L>$ is an ideal in $k[x_1, \dots ,x_n]/J.$

Now set $A=(k[x_1, \dots ,x_n]/J)/<L>=k[x_1, \dots ,x_n]/(J+<L>)$.
\begin{remark} By Hilbert's basis theorem, ideals $J$, $<L>,$ and
$J+<L>$ admit finite bases.
\end{remark}
\begin{remark} If $A_1$ and $A_2$ finite-dimensional then $\dim A_1\times_{A_0}
A_2=\dim A_1+\dim A_2 -\dim A_0$. This follows immediately from
the algorithm described. If $k$ is algebraically closed then
$\length A_i=\dim A_i$, $i=0,1,2,$ and then $\length A_1 \times
_{A_0} A_2 = \length A_1 + \length A_2 -\length A_0.$
\end{remark}

\subsubsection*{Hard case} How to build up affine representations
of algebras $A_0,A_1,A_2$ to validate the Easy case, i.e. such
that $A_0=A_1 \otimes_{k[x_1, \dots, x_n]} A_2$?

Start with principal ideals in $A_0$: only those which are maximal
under inclusion are necessary. Take a generator of each such an
ideal. Choose any maximal $k$-linearly independent subset in the
set of generators chosen. Since $A_0$ is an algebra of finite
type, this subset is obliged to be finite; let it consist of $s$
elements $g_1, \dots, g_s$. Each element $g_j$  corresponds to the
variable $x_j$ in $k[x_1, \dots, x_s].$ The construction done
fixes a homomorphism of $k$-algebras $h_0:k[x_1, \dots,
x_s]\twoheadrightarrow A_0$. Let $K_0:=\ker h_0$. Since $k[x_1,
\dots ,x_n]$ in Noetherian then $K_0$ is  finitely generated
ideal.

Let $g_j'$ be one of preimages  of the element  $g_j\in A_0$ in
$A_1$, $g''_j$ be one of preimages of the same element in $A_2$.
We complete the set  $g'_1, \dots ,g'_s$ to form the set of
$k$-linearly independent generators of the algebra $A_1$, by
adding elements  $g'_{s+1}, \dots. g'_m \in A_1$. Similarly, the
set $g''_1, \dots , g''_s$ is completed to the set of $k$-linearly
independent generators of the algebra $A_2$ by adding elements
$g''_{m+1}, \dots, g''_n$. We put the variables $x_{s+1},
\dots,x_m$ in the correspondence to the elements $g'_{s+1}, \dots
,g'_{m}$ and the variables $x_{m+1}, \dots, x_n$ to the elements
 $g''_{m+1}, \dots, g''_n$.

This leads to  homomorphisms $h_i:k[x_1,
\dots,x_n]\twoheadrightarrow A_i$, $i=0,1,2$, $n\ge s$ defined by
following correspondences
\begin{eqnarray*}&&h_0(x_j)=g_j,\;\; j=1,\dots,s,\\
&&h_0 (x_j)=0,\;\;j=s+1, \dots , n,\\
&&h_1(x_j)=g'_j,\;\; j=1,\dots,m,\\
&&h_1(x_j)=0,\;\; j=m+1,\dots,n,\\
&&h_2(x_j)=g''_j,\;\; j=1, \dots,s,m+1, \dots,n,\\
&&h_2(x_j)=0,\;\; j=s+1,\dots,m.
\end{eqnarray*}
In particular,  $h_0=f_i\circ h_i$.

 Affine representations of
algebras $A_0, A_1, A_2$ and their homomorphisms $f_1, f_2$ lead
to closed immersions of their spectra into affine space ${\mathbb
A}^n$ according to the following commutative diagram
\begin{equation*}\xymatrix{{\mathbb A}^n&\ar@{_(->}[l]_{h_2^{\sharp}} \,\Spec A_2\\
\ar@{_(->}[u]^{h_1^{\sharp}} \Spec
A_1&\ar@{_(->}[l]^{f_1^{\sharp}} \ar@{_(->}[ul]_{h_0^{\sharp}}
\,\Spec A_0 \ar@{_(->}[u]_{f_2^{\sharp}}}
\end{equation*}
By some modification of representing homomorphisms  $h_i$,
$i=0,1,2,$ we will easily achieve that in the appropriate affine
space
$$h_0^{\sharp}(\Spec A_0)=h_1^{\sharp}(\Spec A_1) \cap
h_2^{\sharp}(\Spec A_2).$$ For this purpose choose any system of
generators $\kappa_1,\dots,\kappa_r$ of the ideal $K_0$ (as
$k[x_1, \dots, x_n]$-module). Add to the set of variables $x_1,
\dots, x_n$ additional variables (whose number equals to the
number  $r$ of generators if $K_0$ chosen) $y_1, \dots, y_r$, and
consider in  $A_1$-algebra $A_1[y_1, \dots, y_r]$ an ideal $K'_0$
generated by all relations of view $y_l-\kappa_l(g'_1, \dots,
g'_m),$ $l=1,\dots, r$. Then we have a commutative diagram
\begin{equation*}\xymatrix{
k[x_1, \dots, x_n, y_1, \dots,y_r] \ar@{->>}[d]_{h'_1}
\ar@{->>}[rd]^>>>>>>>>{h'_0} \ar@{->>}[r]^>>>>>{h'_2}&A_2
\ar@{->>}[d]^{f_2}\\A_1[y_1, \dots y_r]/K'_0 \ar@{->>}[r] &A_0}
\end{equation*}
There $h'_i (x_j)=h_i(x_j)$ for $j=1,\dots ,n$ and for $i=0,1,2$,
but $h'_i(y_j)=0$ for $i=0,2$ and $j=1,\dots, r$.
\begin{lemma} There is an isomorphism of $k$-algebras $A_1[y_1, \dots y_r]/K'_0 \cong
A_1$.
\end{lemma}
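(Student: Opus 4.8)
The plan is to exhibit an explicit $k$-algebra homomorphism $A_1 \to A_1[y_1,\dots,y_r]/K'_0$ and its inverse, and check that both composites are the identity. The natural candidate for the map $A_1 \to A_1[y_1,\dots,y_r]/K'_0$ is the composite of the structural inclusion $A_1 \hookrightarrow A_1[y_1,\dots,y_r]$ with the quotient projection; call it $\iota$. For the reverse direction, I would define $\pi: A_1[y_1,\dots,y_r]/K'_0 \to A_1$ by sending every $y_l$ to $\kappa_l(g'_1,\dots,g'_m) \in A_1$ and restricting to the identity on $A_1$. To see that $\pi$ is well-defined, observe that each generator $y_l - \kappa_l(g'_1,\dots,g'_m)$ of $K'_0$ is sent to $\kappa_l(g'_1,\dots,g'_m) - \kappa_l(g'_1,\dots,g'_m) = 0$, so $K'_0 \subset \ker$ of the surjection $A_1[y_1,\dots,y_r] \to A_1$ determined by those assignments, and $\pi$ descends to the quotient.

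Next I would verify the two composites. The composite $\pi \circ \iota: A_1 \to A_1$ is the identity directly from the definitions, since $\iota$ and $\pi$ both act as the identity on the subring $A_1$. For the other composite $\iota \circ \pi$, it suffices to check it on algebra generators of $A_1[y_1,\dots,y_r]/K'_0$, namely the images of $A_1$ and of the $y_l$. On $A_1$ it is the identity. On the class $\bar y_l$, we have $\iota(\pi(\bar y_l)) = \iota\bigl(\kappa_l(g'_1,\dots,g'_m)\bigr)$, which is the class of $\kappa_l(g'_1,\dots,g'_m)$ in the quotient; but modulo $K'_0$ this class equals $\bar y_l$ precisely because $y_l - \kappa_l(g'_1,\dots,g'_m) \in K'_0$. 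Hence $\iota \circ \pi$ fixes all generators and is the identity, so $\iota$ and $\pi$ are mutually inverse isomorphisms of $k$-algebras.

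Conceptually, the statement is the standard fact that adjoining variables $y_l$ together with relations that pin each $y_l$ down to a fixed element of the base ring changes nothing: $A_1[y_1,\dots,y_r]/(y_l - c_l : l) \cong A_1$ for any choice of $c_l \in A_1$. I do not anticipate a genuine obstacle here; the only point requiring a word of care is the well-definedness of $\pi$, i.e. confirming that the ideal $K'_0$ is generated exactly by the elements $y_l - \kappa_l(g'_1,\dots,g'_m)$ and nothing more is being imposed, so that these relations are the only ones that must be checked to vanish. Once that bookkeeping is in place, the isomorphism is immediate, and in particular $A_1[y_1,\dots,y_r]/K'_0$ remains Artinian of the same length as $A_1$, which is what the later steps of the construction will use.
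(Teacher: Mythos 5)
Your proof is correct, but it takes a different route from the paper. You construct explicit mutually inverse $k$-algebra homomorphisms: the composite $\iota$ of the structural inclusion with the quotient map, and the evaluation map $\pi$ sending each $y_l$ to $\kappa_l(g'_1,\dots,g'_m)$, whose well-definedness is immediate because $K'_0$ is by definition generated by the elements $y_l-\kappa_l(g'_1,\dots,g'_m)$; checking the two composites on algebra generators finishes the argument. The paper instead works upstairs in the polynomial ring $k[x_1,\dots,x_n,y_1,\dots,y_r]$: it introduces the ideal $J$ generated by the lifts $y_l-\kappa_l(x_1,\dots,x_n)$, compares the short exact sequence presenting $A_1[y_1,\dots,y_r]/K'_0$ with the one presenting $A_1=k[x_1,\dots,x_n]/K_1$ via an exact $3\times 3$-type diagram in which $J$ corresponds to $(y_1,\dots,y_r)$, and then upgrades the resulting module identification to an algebra isomorphism by observing that the quotient admits the same generators $g'_1,\dots,g'_m$ with the same relations as $A_1$. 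Your version is more elementary and self-contained (it is the standard fact $A_1[y_1,\dots,y_r]/(y_l-c_l)\cong A_1$ for fixed $c_l\in A_1$), and it sidesteps the paper's somewhat delicate assertion about an ``isomorphism of free modules'' between $J$ and $(y_1,\dots,y_r)$ (neither ideal is actually free for $r\ge 2$; what is really used there is the coordinate change $y_l\mapsto y_l+\kappa_l(x)$), whereas the paper's presentation keeps everything phrased in terms of the affine representations and kernels $K_1$, $K'_0$ that the subsequent steps of the construction manipulate.
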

\begin{proof}
Let $K_1=\ker h_1$. Denote by $J'$ the ideal in the ring  $k[x_1,
\dots, x_n, y_1, \dots, y_r]$ which is generated by relations
$y_l-\kappa_l(x_1, \dots, x_n),$ $l=1,\dots,r$. Obviously, $J'
\subset \ker h'_1$. The isomorphism
 of $k[x_1, \dots, x_n, y_1,\dots,y_r]$-modules $A_1[y_1, \dots
y_r]/K'_0 \cong A_1$ follows from the exact diagram of $k[x_1,
\dots, x_n, y_1, \dots y_r]$-modules
\begin{equation*}\xymatrix{&0 \ar[d]&0 \ar[d]\\
& J'\ar[d]
\ar[r]^<<<<<<<<<{\sim}&(y_1, \dots, y_r)\ar[d]&\\
0\ar[r]&\ker h'_1 \ar[d] \ar[r]&k[x_1, \dots, x_n, y_1,\dots,y_r]
\ar[d] \ar[r]&A_1[y_1, \dots,y_r]/K'_0 \ar[d]^{\wr}
\ar[r]&0\\
0\ar[r]&K_1 \ar[d] \ar[r] &k[x_1, \dots ,x_n] \ar[d] \ar[r]&A_1
\ar[r]&0\\
&0&0}\end{equation*} where the isomorphism  $J' \cong (y_1, \dots,
y_r)$ is an isomorphism of free modules of equal ranks done by the
correspondence $y_l-\kappa_l(x_1, \dots, x_n) \mapsto y_l,$ $l=1,
\dots, r$.
 The $k$-algebra isomorphism  $A_1[y_1, \dots y_r]/K'_0 \cong A_1$
follows from the fact that the quotient algebra $A_1[y_1, \dots
y_r]/K'_0 $ admits the same system of generators  $g'_1, \dots,
g'_m$ as an algebra  $A_1$, with same relations (equal to
generators of the ideal $K_1$).
\end{proof}

\begin{proposition}\label{prop} The homomorphisms $h'_i$ lead to the
expression $$A_0=A_1 \otimes_{k[x_1, \dots, x_n,y_1,\dots ,y_r]}
A_2.$$
\end{proposition}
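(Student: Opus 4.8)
The plan is to translate the claimed tensor-product identity into an equality of ideals in $P := k[x_1,\dots,x_n,y_1,\dots,y_r]$ and verify it by writing down explicit generators. Put $I_i := \ker h'_i$, so that $A_i = P/I_i$ for $i=0,1,2$; for $i=1$ this uses the identification $A_1[y_1,\dots,y_r]/K'_0 \cong A_1$ of the Lemma, which is compatible with $h'_1$ and so makes $A_1$ a $P$-algebra via $h'_1$. Since a tensor product of two quotients of $P$ is $A_1 \otimes_P A_2 = P/(I_1 + I_2)$, the Proposition is precisely the assertion $I_0 = I_1 + I_2$.

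The inclusion $I_1 + I_2 \subseteq I_0$ is immediate: the commutative diagram preceding the Lemma shows that $h'_0$ factors through both $h'_1$ and $h'_2$, whence $I_1 \subseteq I_0$ and $I_2 \subseteq I_0$. For the reverse inclusion I would first pin down $I_0$ completely. Decomposing an arbitrary $F \in P$ as $F = F_0(x_1,\dots,x_s) + G$ with $G$ in the ideal $(x_{s+1},\dots,x_n,y_1,\dots,y_r)$, one finds $h'_0(F) = h_0(F_0)$ (where $h_0$ is read as the surjection $k[x_1,\dots,x_s] \to A_0$ with kernel $K_0$), and hence
$$I_0\ =\ (x_{s+1},\dots,x_n,\ y_1,\dots,y_r)\cdot P\ +\ K_0\cdot P\ =\ \bigl(x_{s+1},\dots,x_n,\ y_1,\dots,y_r,\ \kappa_1,\dots,\kappa_r\bigr),$$
the last equality because $\kappa_1,\dots,\kappa_r$ generate $K_0$.

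It then remains to check that each of these generators of $I_0$ lies in $I_1 + I_2$. The variables $x_{m+1},\dots,x_n$ are killed by $h'_1$, hence lie in $I_1$; the variables $x_{s+1},\dots,x_m$ and $y_1,\dots,y_r$ are killed by $h'_2$, hence lie in $I_2$; and for the remaining generators one uses the splitting $\kappa_l = y_l - \bigl(y_l - \kappa_l(x_1,\dots,x_n)\bigr)$, in which $y_l \in I_2$ while $y_l - \kappa_l(x_1,\dots,x_n) \in I_1$ --- the latter because $h'_1(y_l) = \kappa_l(g'_1,\dots,g'_m)$ by the definition of $K'_0$, i.e. $J \subseteq \ker h'_1$ as recorded already in the proof of the Lemma. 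This gives $I_0 \subseteq I_1 + I_2$ and finishes the proof.

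The argument is largely bookkeeping, and the single genuinely load-bearing step --- indeed the reason the auxiliary variables $y_l$ were introduced at all --- is the trivial-looking identity $\kappa_l = y_l - (y_l - \kappa_l)$, which writes each generator of the defining ideal of $A_0$ as a sum of an element of $I_1$ and an element of $I_2$. The places where care is needed are matching the index ranges $1 \le j \le s$, $s < j \le m$ and $m < j \le n$ against the definitions of $h'_0, h'_1, h'_2$, and confirming that $I_0$ admits no generators beyond those listed, which in the end rests on $A_0$ being generated as a $k$-algebra by $g_1,\dots,g_s$ alone.
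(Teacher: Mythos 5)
Your proof is correct, but it follows a genuinely different route from the paper's. The paper verifies the universal property of the pushout directly: it takes a test algebra $Q$ with homomorphisms $\varphi_i: A_i \to Q$ satisfying $\varphi_1\circ h'_1=\varphi_2\circ h'_2$, observes that $\varphi_1 h'_1(x_j)=\varphi_2 h'_2(x_j)=0$ for $j=s+1,\dots,n$, and extracts the relation $0=\varphi_1 h'_1(y_l-\kappa_l)=-\varphi_2 h'_2(\kappa_l)$ to conclude that both $\varphi_i$ factor uniquely through $A_0$. You instead identify $A_1\otimes_P A_2$ with $P/(I_1+I_2)$ for $P=k[x_1,\dots,x_n,y_1,\dots,y_r]$ and reduce the Proposition to the ideal identity $I_0=I_1+I_2$, which you verify generator by generator. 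The two arguments turn on exactly the same mechanism: your splitting $\kappa_l=y_l-(y_l-\kappa_l)$ with $y_l\in I_2$ and $y_l-\kappa_l\in I_1$ is the ideal-theoretic shadow of the paper's computation $\varphi_2 h'_2(\kappa_l)=0$, and both rest on $J\subseteq\ker h'_1$ from the Lemma and on the factorizations $h'_0=f_i\circ h'_i$ (equivalently $I_i\subseteq I_0$). What your version buys is a closed computation with no test objects and, more importantly, an explicit description $I_0=(x_{s+1},\dots,x_n,y_1,\dots,y_r)+K_0\cdot P$, which the paper leaves implicit in the remark that $A_0$ is generated by the images of the first $s$ variables; what the paper's version buys is that it exhibits the factoring homomorphism $\varphi_0$ on the nose, which is the form in which the Proposition is reused later. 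Your caveats about index ranges and about $I_0$ having no further generators are the right places to be careful, and both are handled correctly.
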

\begin{proof} To prove the proposition one needs to confirm that
$A_0$ is universal as a coproduct. Let Q be a $k$-algebra supplied
with two homomorphisms $A_1 \stackrel{\varphi_1}{\longrightarrow}
Q \stackrel{\varphi_2}{\longleftarrow} A_2$ such that the
following diagram commutes:
\begin{equation*}
\xymatrix{k[x_1, \dots, x_n,y_1,\dots, y_r] \ar@{->>}[d]_{h'_1}
\ar@{->>}[r]^>>>>>>{h'_2}
& A_2 \ar@{->>}[d]_{f_2} \ar[rdd]^{\varphi_2}\\
A_1 \ar@{->>}[r]^{f_1} \ar[rrd]_{\varphi_1}& A_0&\\
&&Q}
\end{equation*} Commutativity of the ambient contour
guarantees that $\varphi_1(A_1)=\varphi_2(A_2)$ and hence we can
replace Q by $\varphi_1(A_1)=\varphi_2(A_2)$. However we preserve
the notation $Q$ but assume that homomorphisms $\varphi_i$ are
surjective. By the construction of homomorphisms $h'_i,$ the
algebra $A_0$ is generated by the images of first $s$ variables
$x_1, \dots, x_s$.

Then by the commutativity of the ambient contour
$\varphi_1h'_1(x_j)=\varphi_2h'_2(x_j)$ for $j=1,\dots,n$. In
particular, by the construction of homomorphisms  $h_i$ (and of
homomorphisms $h'_i$ built up by means of them),
$\varphi_1h'_1(x_j)=\varphi_2h'_2(x_j)=0$ for $j=s+1, \dots, n$.
For  $j=1,\dots s$ we have $0=\varphi_1\circ h'_1(y_l-
\kappa_l(x_j))=\varphi_2\circ h'_2(y_l-\kappa_l(x_j))=
-\varphi_2h'_2\kappa_l(x_j)$ for $l=1,\dots r$ since
$h'_2(y_l)=0$. This implies that homomorphisms  $\varphi_i$,
$i=1,2,$ factor through the algebra $A_0.$

The homomorphism $\varphi_0: A_0 \to Q$ is uniquely defined on
generators $h'_0(x_j)$ of $A_0$ by the correspondence $h_0(x_j)
\mapsto \varphi_i \circ h'_i (x_j),$ $i=1,2,$ $j=1, \dots, s.$
\end{proof}

\begin{remark} Proposition \ref{prop} means that affine
representations constructed are such that the intersection of
images of schemes $\Spec A_1$ and $\Spec A_2$ under closed
immersions into  $\Spec k[x_1, \dots, x_n, y_1,\dots,y_r]={\mathbb
A}^{n+r}$ is the image of $\Spec A_0.$
\end{remark}

\begin{remark} In further considerations we use the notation
of the view  $$A_1 \stackrel{h_1}{\longleftarrow} k[x_1, \dots,
x_n] \stackrel{h_2}{\longrightarrow} A_2$$ assuming that
homomorphisms  $h_i$. $i=1,2,$ are surjective and that they admit
application of the Easy case, i.e. Proposition \ref{prop} holds
for them.
\end{remark}
\section{Well-Definedness and Universality}
\subsection{Different affine representations chosen}
\begin{proposition} Different affine representations of algebras $A_0,
A_1, A_2$ lead to isomorphic product algebras $A$.
\end{proposition}

\begin{proof} Let we have two pairs of different affine
representations defined by pairs of homomorph\-isms $$A_1
\stackrel{h_1}{\twoheadleftarrow} k[x_1, \dots, x_n]
\stackrel{h_2}{\twoheadrightarrow} A_2 \quad \mbox{ \rm and}\quad
A_1 \stackrel{h'_1}{\twoheadleftarrow} k[x_1', \dots, x_m']
\stackrel{h'_2}{\twoheadrightarrow} A_2$$ such that $A_0=A_1
\otimes_{k[x_1', \dots, x_m']} A_2=A_1 \otimes_{k[x_1, \dots,
x_n]} A_2$. They correspond to two pairs of closed immersions of
prime spectra $$ \Spec A_1 \stackrel
{h_1^{\sharp}}{\hookrightarrow} {\mathbb A}^n
\stackrel{h_2^{\sharp}}{\hookleftarrow} \Spec A_2 \quad \mbox{\rm
and} \quad \Spec A_1 \stackrel {{h_1'}^{\sharp}}{\hookrightarrow}
{\mathbb A}^m \stackrel{{h_2'}^{\sharp}}{\hookleftarrow} \Spec A_2
$$
such that $h_1^{\sharp}(\Spec A_1) \cap h_2^{\sharp}(\Spec
A_2)=h_0^{\sharp}(\Spec A_0)$ and ${h'_1}^{\sharp}(\Spec A_1) \cap
{h'_2}^{\sharp}(\Spec A_2)={h'_0}^{\sharp}(\Spec A_0)$. This leads
to closed immersions $\overline {h_i^{\sharp}}: \Spec A_i
\hookrightarrow {\mathbb A}^n \times {\mathbb A}^m={\mathbb
A}^{n+m}$ as composites with the diagonal immersion $\diag$:
$$\overline {h_i^{\sharp}}:\Spec A_i
\stackrel{\diag}{\hookrightarrow} \Spec A_i \times \Spec A_i
\stackrel{(h_i^{\sharp},
{h_i'}^{\sharp})}{-\!\!-\!\!\!\longrightarrow} {\mathbb A}^n
\times {\mathbb A}^m.$$ Now confirm that in this case also
$\overline{h_1^{\sharp}}(\Spec A_1) \cap
\overline{h_2^{\sharp}}(\Spec A_2)=\overline{h_0^{\sharp}}(\Spec
A_0)$.

Homomorphisms $\overline h_i$ are defined as composites
\begin{eqnarray}
k[x_1, \dots, x_n]\otimes _k k[x_1', \dots, x_m']\stackrel{(h_i,
h_i')}{-\!\!\!\longrightarrow}& A_i \otimes_k A_i
&\stackrel{\delta_i}{\to} A_i, \nonumber
\\
g_1 \otimes g_2 \;\quad \mapsto\, & h_i(g_1) \otimes h_i'(g_2)
&\mapsto h_i(g_1) \cdot h_i'(g_2). \nonumber
\end{eqnarray}

Let $Q$ be an algebra together with two homomorphisms $\varphi_i:
A_i \to Q$ such that $ \varphi_1\circ \overline h_1 = \varphi_2
\circ \overline h_2$. To confirm that $A_0=A_1 \otimes _{k[x_1,
\dots, x_n]\otimes _k k[x_1', \dots, x_m']}A_2$ it is necessary to
construct a unique homomorphism $\varphi_0: A_0 \to A_T$ such that
$\varphi_i=\varphi_0 \circ f_i$. Consider commutative diagrams
\begin{equation*} \xymatrix{A_i\otimes_k A_i \ar[d]_{\delta_i} \ar[r]^{(f_i,
f_i)}& A_0 \otimes _k A_0 \ar[d]^{\delta_0} \ar[ddr]^{\varphi'_0}\\
A_i \ar[r]^{f_i} \ar[drr]_{\varphi_i}&A_0 \ar@{..>}[dr]_<<<{\varphi_0}\\
&& Q}
\end{equation*}
for $i=1,2$. Let for any reducible tensor $x\otimes x' \in A_i
\otimes_k A_i$ $$\begin{array}{ll}(f_i, f_i)(x\otimes
x')=\overline x \otimes \overline x' \in A_0 \otimes_k A_0;&
\delta_0 (\overline x \otimes \overline x')=\overline x\cdot
\overline
x' \in A_0;\\
f_i(x\cdot x')=\overline x \otimes \overline x' \in A_0;&\delta_i
(x\otimes x')=x\cdot x'\in A_i.
 \end{array}$$
Since by  universality of the product $$A_0 \otimes_k A_0= (A_1
\otimes_k A_1)\otimes_{k[x_1,\dots, x_n]\otimes_k k[x'_1, \dots,
x'_m]} (A_2 \otimes_k A_2)$$ the homomorphism $\varphi'_0$ is
uniquely defined using homomorphisms
$$(\varphi_i, \varphi_i): A_i\otimes_k A_i \to Q: x\otimes
x'\mapsto \varphi_i(x)\cdot \varphi_i(x')
$$
as
$$\varphi'_0(\overline x \otimes \overline
x')=\varphi_i(x)\cdot \varphi_i(x') \in Q$$ then $\varphi_0:
A_0\to Q$ is also uniquely defined as $\varphi_0(\overline
y)=\varphi_i(y)$ for $\overline y=f_i(y)$, $y\in A_i$.

Now it rests to form product algebras for three different affine
representations \begin{equation*}\xymatrix{k[x_1, \dots, x_n]
\ar@{->>}[d]_{h_i}&\ar@{->>}[l]_<<<<\lambda k[x_1, \dots, x_n]
\otimes _k k[x_1', \dots, x_m'] \ar@{->>}[d]_{\overline
h_i}\ar@{->>}[r]^>>>>>\rho& k[x_1', \dots, x_m']
\ar@{->>}[d]^{h'_i}\\
A_i\ar@{=}[r]& A_i& \ar@{=}[l] A_i}
\end{equation*}
Horizontal homomorphisms are defined by correspondences $\lambda:
x'_l\mapsto 0$, $l=1,\dots, m$ and $\rho: x_j \mapsto 0$,
$j=1,\dots, n$ respectively.

Denoting by $\overline A$ the product algebra for $\overline h_i:
k[x_1, \dots, x_n] \otimes _k k[x_1', \dots,
x_m']\twoheadrightarrow A_i$, $i=0,1,2,$ we come to the
commutative diagram
\begin{equation}\label{cd2}\xymatrix{k[x_1, \dots, x_n] \ar@{->>}[d]_{h}&\ar@{->>}[l]_<<<<\lambda
k[x_1, \dots, x_n] \otimes _k k[x_1', \dots, x_m']
\ar@{->>}[d]_{\overline h}\ar@{->>}[r]^>>>>>\rho& k[x_1', \dots,
x_m']
\ar@{->>}[d]^{h'}\\
A&\ar@{->>}[l]_{p_A} \overline A\ar@{->>}[r]^{p'_A}&  A'}
\end{equation}
associated to closed immersions
\begin{eqnarray*}&&\xymatrix{&{\mathbb A}^n\;\; \ar@{^(->}[rr]&&\;\;\;
{\mathbb A}^n \times {\mathbb A}^m \;\;&&\ar@{_(->}[ll]
\;\; {\mathbb A}^m,&}\\
&&\xymatrix{\;\;(x_1,\dots,x_n)\ar@{|->}[r]&(x_1, \dots, x_n,
0,\dots,
0),}\\
&&\xymatrix{&&&\!\!(0,\dots, 0,x'_1,\dots,x'_m)&\ar@{|->}[l]
(x'_1, \dots, x'_m).}\end{eqnarray*} Vertical homomorphisms in
(\ref{cd2}) define affine representations for product algebras
built up according to the Easy case. Lower horizontal morphisms
are surjective by commutativity of the diagram (\ref{cd2}).

To prove isomorphicity of lower homomorphisms in (\ref{cd2})
consider sections of homo\-morphisms $k[x_1, \dots,
x_n]\stackrel{\lambda}{\twoheadleftarrow}k[x_1, \dots , x_n]
\otimes _k k[x'_1,\dots,
x'_m]\stackrel{\rho}{\twoheadrightarrow}k[x_1, \dots, x_m],$ i.e.
homomorph\-isms $k[x_1, \dots,
x_n]\stackrel{s_1}{\hookrightarrow}k[x_1, \dots , x_n] \otimes _k
k[x'_1,\dots, x'_m]\stackrel{s_2}{\hookleftarrow}k[x_1, \dots,
x_m]$ defined by following rules $s_1: f \mapsto f \otimes 1$,
$s_2: g \mapsto 1\otimes g$. A nonzero element from $A$ has
nonzero preimage in $k[x_1, \dots, x_n]$ which haves nonzero image
in $k[x_1, \dots, x_n] \otimes_k k[x'_1, \dots, x'_m]$. This image
is taken to nonzero element of at least one of algebras $A_1$,
$A_2$ and hence has nonzero image in $\overline A$.  This leads to
a homomorphism of $k$-algebras $s_A: A\to \overline A.$ It is a
section of the homomorphism $p_A$ by its construction.

Now remark that $s_A$ is surjective. Indeed, one can choose a
system of $k$-generators (which is not obliged to be a basis) in
$\overline A$ which are images of reducible tensors $f\otimes g$
from $k[x_1, \dots , x_n] \otimes _k k[x'_1,\dots, x'_m]$, $f\in
k[x_1, \dots, x_n ],$ $g\in k[x'_1, \dots, x'_m]$. This expression
is defined up to the action of $k^{\ast}=k\setminus 0$. A
polynomial $f$ has nonzero image  at least in one of algebras
$A_1, A_2$. Hence it is taken to nonzero element in $A$. Since
$p_A \circ s_A=\id_A$ then both $s_A$ and $p_A$ are isomorphisms.

Isomorphicity of $p'_A$ is proven analogously. \end{proof}

\subsection{Universality}
\begin{proposition} The construction of product algebra $A$ is indeed universal,
i.e. $A$ is true fibred product. \end{proposition}

\begin{proof} Let $A_T$ be an affine  $k$-algebra together with two
homomorphisms $\chi_i: A_T \to A_i$, $i=1,2$ such that $\chi_1
\circ f_1=\chi_2\circ f_2$. We construct a homomorphism $\varphi$
to complete the diagram \begin{equation*}
\xymatrix{&\ar[ld]_{\chi_1}A_T \ar@{..>}[d]^{\varphi} \ar[rd]^{\chi_2}\\
A_1& \ar[l]_{\overline f_1} A \ar[r]^{\overline f_2}& A_2}
\end{equation*}
Choose  appropriate affine representations of algebras $A_T, A,
A_i$, $i=1,2$. Perform the manipulations as described in the Easy
case. Namely, let $J$ be an ideal in $k[x_1, \dots, x_n]$
generated by all monomials taken to 0 in $A_T$, $A$ and $A_i$,
$i=1,2$.

Quotient algebra $k[x_1, \dots, x_n]/J$ includes in the
commutative diagram
\begin{equation*}\xymatrix{&\ar@{->>}[ld]_{f_1}A_1\\
A_0&A_T \ar[u]^{\chi_1}
\ar[d]_{\chi_2}&\ar[ul]_<<<<{\!\!\!\overline f_1}
\ar[ld]^<<<{\!\!\!\overline f_2} A&\ar@{->>}[l] \ar@{->>}[ull]
\ar@{->>}[lld] k[x_1, \dots , x_n]/J\\
&\ar@{->>}[ul]^{f_2} A_2}
\end{equation*}

Choose an arbitrary element $\alpha \in A_T$; it is taken to
$\chi_1(\alpha) \in A_1$ and to $\chi_2(\alpha)\in A_2$ so that
$f_1 \chi_1(\alpha) =f_2 \chi_2(\alpha).$ Any $\gamma$ of
preimages of $\alpha $ in $k[x_1, \dots , x_n]/J$ is also taken to
$f_1 \chi_1(\alpha)=f_2 \chi_2(\alpha) \in A_0$ and to some
element $\overline \alpha \in A$. We claim that $\overline \alpha$
does not depend on the choice of $\gamma$.

Choose another $\gamma' \in k[x_1, \dots , x_n]/J$ taken to
$\alpha$, then $\gamma -\gamma'$ maps to zero in $A_T$ and hence
it is mapped to zero in both $A_i,$ $i=1,2$. Then by the
construction of $A$ (cf. Easy case) $\gamma -\gamma'$ is taken to
zero in $A$. This shows that there is a homomorphism $\varphi:A_T
\to A$ such that $\chi_i=\varphi \circ \overline f_i$, $i=1,2,$ as
required.

It rests to confirm ourselves that the homomorphism $\varphi: A_T
\to A$ does not depend on the choice of affine representations of
algebras  $A_T$, $A_i$, $i=0,1,2.$ For this sake assume that there
are two different collections of affine representations include
into commutative diagrams
\begin{equation}\label{tworep}\xymatrix{&\ar@{->>}[ldd]_{h_1} k[x_1, \dots, x_n] \ar@{->>}[d]^{h_T} \ar@{->>}[rdd]^{h_2}\\
&\ar[ld] A_T \ar[rd]&\\
A_1\ar@{->>}[dr]_{f_1}&\ar[l]A\ar[r]&\ar@{->>}[dl]^{f_2}A_2\\
&A_0&}\quad \xymatrix{&\ar@{->>}[ldd]_{h'_1} k[x'_1, \dots, x'_m] \ar@{->>}[d]^{h'_T} \ar@{->>}[rdd]^{h'_2}\\
&\ar[ld] A_T \ar[rd]&\\
A_1\ar@{->>}[dr]_{f_1}&\ar[l]A\ar[r]&\ar@{->>}[dl]^{f_2}A_2\\
&A_0&}
\end{equation}

Then consider the algebra $k[x_1,\dots,x_n]\otimes_k k[x'_1,
\dots, x'_m]$ and its homomorphisms on quotient algebras
$k[x_1,\dots,x_n]\stackrel{\lambda}{\longleftarrow}k[x_1,\dots,x_n]\otimes_k
k[x'_1, \dots, x'_m]\stackrel{\rho }{\longrightarrow}k[x'_1,
\dots, x'_m]$ which are defined by their kernels  $\ker
\lambda=(x'_1, \dots, x'_m)$, $\ker \rho=(x_1, \dots, x_n)$ and
correspond to immersions of subspaces ${\mathbb A}^n
\stackrel{\lambda^{\sharp}}{\hookrightarrow} {\mathbb A}^n \times
{\mathbb A}^m \stackrel{\rho^{\sharp}}{\hookleftarrow}{\mathbb
A}^m$ as linear subvarieties. Representations  (\ref{tworep})
define induced affine representations for algebras $A_T$, $A_i$,
$i=0,1,2,$ according to the following rule: $\overline
h_T(x_j\otimes x'_l)=h_T(x_j)\cdot h'_T(x'_l)$, $\overline
h_i(x_j\otimes x'_l)=h_i(x_j)\cdot h'_i(x'_l)$, $i=0,1,2$,
$j=1,\dots,n,$ $l=1,\dots,m.$ This rule corresponds to the
composites of immersions of prime spectra
\begin{eqnarray*}\overline h_T^{\sharp}&:&\Spec A_T
\stackrel{\diag}{\hookrightarrow} \Spec A_T \times \Spec A_T
\stackrel{(h^{\sharp}_T,{h'_T}^{\sharp})}{\hookrightarrow}
{\mathbb A}^n \times {\mathbb A}^m,
\\
\overline h_i^{\sharp}&:&\Spec A_i
\stackrel{\diag}{\hookrightarrow} \Spec A_i \times \Spec A_i
\stackrel{(h^{\sharp}_i,{h'_i}^{\sharp})}{\hookrightarrow}{\mathbb
A}^n \times {\mathbb A}^m. \end{eqnarray*}

Let $J$ be the ideal generated by all monomials in $k[x_1, \dots ,
x_n]$ taken to 0 in $A_T$ and in $A_i$, $i=0,1,2$, $J'$ be the
similar ideal in $k[x'_1, \dots, x'_m],$ and $\overline J$ the
similar ideal in $k[x_1, \dots , x_n]\otimes _k k[x'_1, \dots ,
x'_m].$ Quotient algebras by these ideals include into the
following commutative diagram
\begin{equation*}\xymatrix{k[x_1,\dots,x_n]\ar@{->>}[d]
&\ar[l]_<<<<<<{\lambda}k[x_1,\dots,x_n]\otimes_k k[x'_1, \dots,
x'_m]\ar@{->>}[d] \ar[r]^>>>>>>{\rho }&k[x'_1, \dots,
x'_m]\ar@{->>}[d]\\
k[x_1, \dots , x_n]/J&\ar@{->>}[l]k[x_1, \dots , x_n]\otimes_k
k[x'_1, \dots, x'_m]/\overline J \ar@{->>}[r]&k[x'_1, \dots ,
x'_m]/J'}\end{equation*} Now we perform the construction of
homomorphisms $\varphi, \varphi'$ and $\overline \varphi$ using
 quotient algebras $k[x_1, \dots , x_n]/J, k[x'_1, \dots ,
x'_m]/J'$ and $k[x_1, \dots , x_n]\otimes_k k[x'_1, \dots,
x'_m]/\overline J $ respectively, as described in the beginning of
this subsection. This yields in two commutative diagrams
\begin{equation*}\xymatrix{k[x_1, \dots , x_n]/J\ar@{->>}[dd] \ar@{->>}[dr]&&\ar@{->>}[ll]k[x_1, \dots , x_n]\otimes_k k[x'_1,
\dots,\! x'_m]/\overline J  \ar@{->>}[dd]
\ar@{->>}[dr]\\
&\ar@{->>}[ld]_{\varphi}A_T\ar@{=}[rr]&&\ar@{->>}[ld]_{\overline \varphi}A_T\\
A\ar@{=}[rr]&&A}\end{equation*}
\begin{equation*}\xymatrix{k[x_1, \dots , x_n]\otimes_k k[x'_1,
\dots, x'_m]/\overline J  \ar@{->>}[dd]
\ar@{->>}[dr]\ar@{->>}[rr]&&k[x'_1, \dots , x'_m]/J'\ar@{->>}[dd]
\ar@{->>}[dr]\\
&\ar@{->>}[ld]_{\overline \varphi}A_T\ar@{=}[rr]&&\ar@{->>}[ld]_{\varphi'}A_T\\
A\ar@{=}[rr]&&A}\end{equation*} what implies that
$\varphi=\varphi'=\overline \varphi.$ \end{proof}

\begin{example} For example consider  $A_1=A_2=k[x,y]/(x^2-y),$ $A_0=k[x]/(x^2)$,
with homomorphisms $f_i: x\mapsto x, y\mapsto 0,$ $i=1,2.$ The
"occasional"\, affine representation used in the definition of
these algebras cannot be involved to construct their product since
$A_1 \otimes_{k[x,y]} A_2=k[x,y]/(x^2-y)\ne A_0.$ Then it is
necessary to form a new affine representation which fits for
constructing a product, using Hard case.

The algebra $A_0=k[x]/(x^2)$ contains unique maximal principal
ideal. Its generating element $x$ corresponds to first variable
 $x_1$ and gives rise to the affine representation $k[x_1]\twoheadrightarrow A_0,$ $x_1\mapsto x$.
Further,  manipulations according to the Hard case lead to the
polynomial algebra  $k[x_1, x_2, x_3, y]$ and representations for
algebras $$A_1=k[x_1, x_2, x_3, y]/(x_1^2-x_2, x_3, x_1^2-y),\quad
A_2=k[x_1, x_2, x_3, y]/(x_1^2-x_3, x_2, y).$$ These algebras
represent geometrically two parabolas with common tangent line in
two 2-dimensional planes in 4-dimensional affine space. Planes
meet along this tangent line. In this case
\begin{eqnarray*} A_1 \otimes_{k[x_1, x_2, x_3, y]} A_2&=&k[x_1,
x_2, x_3,
y]/(x_1^2-x_2, x_3, x_1^2-y,x_1^2-x_3, x_2, y)\\
&=&k[x_1, x_2, x_3, y]/(x_1^2, x_2,x_3,y)=A_0.\end{eqnarray*} This
means that our two parabolas intersect along the subscheme defined
by the algebra $A_0$. This validates application of Easy case to
affine representations we've constructed. The list $J$ of
monomials vanishing in both algebras $A_i,$ $i=1,2,$ is empty. The
ideal $<L>$ has a form $<L>=(x_1^2-x_2-x_3, x_2-y, x_2x_3)$, and
$A_1\times_{A_0}A_2=k[x_1, x_2,
x_3,y]/<L>=k[x_1,x_2,x_3,y]/(x_1^2-x_2-x_3, x_2-y,
x_2x_3)=k[x_1,x_2,x_3]/(x_1^2-x_2-x_3, x_2x_3).$ From the
geometrical point of view this is union of two parabolas having a
common tangent line at the origin. Parabolas lie in different
2-planes which meet along parabolas' common tangent line.
 \end{example}

\end{document}